\documentclass[12pt,a4paper]{amsart}
\usepackage{color}
\usepackage{amssymb}
\usepackage{graphicx}

\usepackage{psfrag}

\def\Z{\mathbb{Z}}

\def\e{\varepsilon}

\def\G{\Gamma}

\def\<{\langle}
\def\>{\rangle}
\def\-{\overline}

\def\deff{{\rm{def}}}

\newtheorem{theorem}{Theorem}[section]
\newtheorem{lemma}[theorem]{Lemma}

\newtheorem{proposition}[theorem]{Proposition}

\newtheorem{corollary}[theorem]{Corollary}
\newtheorem{problem}[theorem]{Problem}


\theoremstyle{definition}

\newtheorem{remarks}[theorem]{Remarks}
\newtheorem{remark}[theorem]{Remark}

\numberwithin{equation}{section}


\catcode`\@=11
\def\serieslogo@{\relax}
\def\@setcopyright{\relax}
\catcode`\@=12

\textheight 20cm \textwidth 15cm

\begin{document}

\title[Concise Presentations of Direct Products]{Concise Presentations of Direct Products}
\thanks{The author is supported by a Wolfson Research Merit Award from the Royal Society}

\author[Bridson]{Martin R.~Bridson}
\address{Martin R.~Bridson\\
Mathematical Institute \\
Andrew Wiles Building\\
Oxford OX2 6GG \\ 
European Union} 
\email{bridson@maths.ox.ac.uk}

\subjclass{20F05, 20J06}

\keywords{Group presentations, direct products, homology of groups, relation gap}

\begin{abstract}Direct powers of perfect groups admit more concise presentations than one might naively suppose.
If $H_1(G,\Z)=H_2(G,\Z)=0$, then $G^n$ has a presentation with $O(\log n)$ generators and $O(\log n)^3$ relators.
If, in addition, there is an element $g\in G$ that has infinite order in every non-trivial quotient of $G$,
then $G^n$ has a presentation with $d(G) +1$ generators and $O(\log n)$ relators. 
The bounds that we obtain on the deficiency of $G^n$ are not monotone in $n$; this points to potential counterexamples for the Relation Gap Problem.
\end{abstract} 

\maketitle

\section{Introduction}

If two groups are presented as $A=\<X\mid R\>$ and  
$B= \<Y\mid S\>$, then their direct product is given by the presentation 
with generators $X\sqcup Y$ and relators $ R,\, S$ and $\{ [x,y] : x\in X,\, y\in Y\}$.
Similarly, if $A_i=\< X_i \mid R_i\>$ with $|X_i|=k_i$ and $|R_i|=l_i$, then the obvious
presentation of $A_1\times \dots \times A_n$ has $\sum k_i$ generators and $\sum l_i + \sum_{i< j}k_ik_j$
relators. In particular, the direct product $A^n$ of $n$ copies of $A=\<X\mid R\>$ with $|X|=k$ and $|R|=l$
has a presentation with $kn$ generators and $nl+ k^2n(n-1)/2$ relators. 
In the absence of further hypotheses, one cannot do better than these naive bounds. For example,
 one cannot generate $\Z^n$ with fewer than $n$ generators, and the number of relators needed to 
present $\Z^n$ is at least the rank of $H_2(\Z^n,\Z) \cong \Z^n\wedge \Z^n$,
which is $n(n-1)/2$. But when $H_1(G,\Z)$ and $H_2(G,\Z)$ vanish, one can construct much more concise
presentations of $G^n$ -- that is the main theme of this note. 
 
 We shall see that,
in addition to the vanishing of homology, 
the existence of finite quotients of $G$ plays a key role
in determining how concise a presentation of $G^n$ can be.
 The various possibilities are summarised in the following theorem, in which we use the standard
 notation $d(\G)$ for the minimal number of generators of $\G$ and we define
 $\rho(\G)$ to be the minimum number of relators in any finite presentation of $\G$.
 All of the results concerning the growth of $d(G^n)$ are taken from \cite{WW}; they
draw on earlier results of Hall \cite{hall}, Wiegold \cite{W1, W2, W3} and others. 
 The estimates on $\rho(G^n)$ are new (or trivial).
 
 We use the standard notation $f(n) = \Theta(g(n))$ for functions that are bounded 
 above and below by positive multiples of $g(n)$, and for brevity we write $H_iG$ in place of $H_i(G,\Z)$.
 Throughout, $G^n$ denotes  the direct product of $n$ copies of $G$.

\begin{theorem}\label{t:summary} Let $G$ be a finitely presented group.
\smallskip

\noindent {\rm{(1)}} If $H_1G\neq 0$, then 
$d(G^n) = \Theta(n)$ and $\rho(G^n)=\Theta(n^2)$.
\smallskip

\noindent {\rm{(2)}} If $H_1G=0$ and $H_2G\neq 0$, then $d(G^n)=O(\log n)$ and  $\rho(G^n) = \Theta(n)$.
\smallskip

\noindent {\rm{(3)}} If $H_1G=H_2G=0$, then $d(G^n)=O(\log n)$ and $\rho(G^n)=O(\log n)^3$.
\smallskip

\noindent {\rm{(4)}} If $H_1G=H_2G=0$ and $G$ has a non-trivial finite quotient, then $d(G^n)=\Theta(\log n)$ and there are constants $c_0,c_1$ such that
$c_0 \log n \le \rho(G^n) \le c_1(\log n)^3$.
\smallskip

\noindent {\rm{(5)}}  If $H_1G=H_2G=0$ and there is an element $g\in G$ that has infinite order in every non-trivial quotient of $G$,
then $d(G^n)\le d(G) +1$ for all $n$, and $\rho(G^n) = O(\log n)$. 

\medskip
\noindent In all cases, the upper bounds on $d(G^n)$ and $\rho(G^n)$ can be satisfied simultaneously.
\end{theorem}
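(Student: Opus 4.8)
The plan is to derive every estimate on $\rho(G^n)$ directly, importing the growth of $d(G^n)$ from \cite{WW} as a black box. The backbone is a pair of elementary inequalities valid for any finitely presented group $\Gamma$, where $d(\,\cdot\,)$ denotes the minimal number of generators of an abelian group and $\mathrm{rk}$ its free rank:
\[
\rho(\Gamma)\ \ge\ d(H_2\Gamma)
\qquad\text{and}\qquad
\rho(\Gamma)\ \ge\ d(\Gamma)-\mathrm{rk}(H_1\Gamma).
\]
Both come from abelianizing an arbitrary finite presentation $\Gamma=F/N$: the five-term exact sequence embeds $H_2\Gamma$ into the relation module $N/[F,N]$, which is generated by the images of the $\rho(\Gamma)$ defining relators, giving the first; abelianizing the whole presentation realises $\ker(\Z^{|X|}\twoheadrightarrow H_1\Gamma)$ as a subgroup of $\Z^{|X|}$ of rank $|X|-\mathrm{rk}(H_1\Gamma)$ generated by the relators, giving the second. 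I would then compute, by K\"unneth, $H_1(G^n)=(H_1G)^n$ and $H_2(G^n)=(H_2G)^n\oplus\binom{n}{2}(H_1G\otimes H_1G)$, the Tor-terms vanishing because $H_0=\Z$ is free. Since a nonzero finitely generated abelian group $A$ satisfies $d(A^n)\ge n$ (it has a cyclic quotient, whence $A^n$ surjects onto $\Z^n$ or $(\Z/p)^n$), these formulas supply every lower bound: in (1), $d(G^n)\ge d((H_1G)^n)\ge n$ and $\rho(G^n)\ge d\big(\binom{n}{2}(H_1G\otimes H_1G)\big)\ge\binom{n}{2}$, using $H_1G\otimes H_1G\neq0$; in (2), $\rho(G^n)\ge d((H_2G)^n)\ge n$; and in (4), as $H_1(G^n)=0$ the second inequality gives $\rho(G^n)\ge d(G^n)=\Theta(\log n)$.

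For the upper bounds I would first dispose of the cheap ones. The naive presentation from the introduction gives $d(G^n)\le d(G)\,n$ and $\rho(G^n)=O(n^2)$, finishing (1). In the remaining cases $G$ is perfect, and I import the generation statements of \cite{WW}: $d(G^n)=O(\log n)$ when $H_1G=0$; the matching lower bound $d(G^n)=\Omega(\log n)$ when $G$ also has a nontrivial finite quotient (whose simple sections are nonabelian, so counting epimorphisms of an $m$-generated group onto a fixed such section forces $m\ge c\log n$); and $d(G^n)\le d(G)+1$ under the hypothesis of (5). This reduces the theorem to three upper bounds on $\rho$: namely $O(n)$ in (2), $O(\log n)^3$ in (3) and (4), and $O(\log n)$ in (5).

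These all rest on one construction: present $G^{2^k}$ (with $2^k\ge n$, so $k=O(\log n)$) by inductive doubling, passing from a presentation $P_j$ of $G^{2^j}$, with $g_j$ generators and $\rho_j$ relators, to a presentation $P_{j+1}$ of $G^{2^{j+1}}=G^{2^j}\times G^{2^j}$. The crux is a \emph{Squaring Lemma}: if $A$ is perfect and presented by $\langle Y\mid S\rangle$ with $|Y|=g$, then $A\times A$ has a presentation with $g+O(1)$ generators whose relators are $O(g^2)$ commutator-type relations together with the relators of the factors read on a generating set of size $g+O(1)$; and, crucially, when $A$ is superperfect a single copy of $S$ (carried on the diagonal $\Delta A\cong A$) already suffices, the relators of the second factor becoming redundant. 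Because doubling the exponent costs only $O(1)$ generators (this is the $O(\log n)$ generation of \cite{WW}), one has $g_j=O(j)$ and the merge contributes $O(g_j^2)=O(j^2)$ relators per stage. Thus when $H_2G=0$ the groups $G^{2^j}$ are superperfect and $\rho_{j+1}\le\rho_j+O(j^2)$, whence $\rho_k=\sum_{j\le k}O(j^2)=O(k^3)=O(\log n)^3$, proving (3) and (4); when $H_2G\neq0$ one cannot discard the second copy, so $\rho_{j+1}\le 2\rho_j+O(j^2)$ and $\rho_k=O(2^k)=O(n)$, matching the lower bound and proving (2). Under the hypothesis of (5) the element $g$ of infinite order in every quotient furnishes a distinguished coordinate along which the commuting relations are generated by a bounded number of conjugates, so the merge cost drops to $O(1)$; combined with superperfectness this yields $\rho_{j+1}\le\rho_j+O(1)$ and $\rho_k=O(k)=O(\log n)$.

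The main obstacle is the Squaring Lemma, and inside it the collapse from two copies of $S$ to one. Concretely, one must realise $A\times A$ as generated by $\Delta A$ together with $O(1)$ extra elements and then prove that, modulo $S$ (read on $\Delta A$) and the $O(g^2)$ commutator-type relations, those extra generators force a commuting second copy of $A$ to split off \emph{with no further relations needed}. The natural tool is the cartesian subgroup $K=\ker(A*A\twoheadrightarrow A\times A)$, whose module of coinvariants satisfies $H_0(A\times A,K^{ab})\cong H_1A\otimes H_1A$ and so vanishes when $A$ is perfect; this vanishing is what should render the second factor's relations redundant. I expect the delicate point to be showing that the proposed relators \emph{normally} generate all of $K$, not merely generate its abelianisation as a module — the very issue underlying the Relation Gap Problem — and this I would settle by a direct analysis of $K$ as a $\Z[A\times A]$-module. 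Finally, since every upper bound above is read off from the single presentation $P_k$, the bounds on $d(G^n)$ and $\rho(G^n)$ are automatically achieved simultaneously, which is the theorem's closing assertion.
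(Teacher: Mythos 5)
Your lower bounds, your treatment of case (1) and of the linear lower bound in (2), and your overall doubling strategy all match the paper, but the engine of your argument --- the ``Squaring Lemma'' --- is exactly where the proposal has a genuine gap, and you have in effect flagged it yourself. You propose to show that for superperfect $A$ a single copy of $S$ on the diagonal suffices, by noting that the cartesian subgroup $K=\ker(A*A\twoheadrightarrow A\times A)$ has $H_0(A\times A,K^{\rm ab})\cong H_1A\otimes H_1A=0$ and then promising ``a direct analysis of $K$ as a $\Z[A\times A]$-module.'' But as you concede, vanishing of coinvariants (or module generation of $K^{\rm ab}$) only controls the relation module, not normal generation of $K$ in the free group --- this is precisely the Relation Gap phenomenon, and no amount of module-theoretic analysis of $K^{\rm ab}$ can close that jump. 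Since every upper bound in (3), (4), (5) rests on this unproven lemma, the proof is incomplete at its core. The paper avoids the issue entirely with a concrete device you don't have: an explicit finite presentation of the universal central extension (Lemma 2.1, from Proposition 3.5 of \cite{mb:karl}), namely $\widetilde G=\langle X\mid x^{-1}c_x,\ [r,x]\rangle$ where each $c_x\in[F,F]$ rewrites a generator as a commutator word. In Proposition 2.3 the second factor is presented not by zero extra relations but by the $k$ relations $y_i^{-1}c_i(\underline y)$, while the $k^2$ relators $[x_iy_i^{-1},y_j]$ do double duty: they make $\langle\underline y\rangle$ normal with inner action, and they force each transcribed relator $r_j(\underline y)$ to be \emph{central} in $\langle\underline y\rangle$, so that $\langle\underline y\rangle$ satisfies the presentation of $\widetilde G$, which equals $G$ by superperfectness; the resulting semidirect product is direct because the action is inner. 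Note this costs $k+k^2$ relations per doubling, which still fits your $O(g_j^2)$ budget, so your arithmetic for (2), (3), (4) survives once the lemma is replaced by this mechanism.

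Two secondary repairs. First, you construct presentations of $G^{2^k}$ but never descend to $G^n$: one must write $G^{2^k}=G^n\times G^{2^k-n}$ and kill the complementary factor by adding relations annihilating a generating set of it, costing a further $O(\log n)$ relations (paper: Corollary 2.5 and the end of Theorem 2.7) --- harmless, but necessary, and in case (2) this is also where the paper takes a different route from yours: rather than your recursion $\rho_{j+1}\le 2\rho_j+O(j^2)$, it presents $\widetilde G^{\,n}$ in $O(\log n)^3$ relations and adds $n\,d(H_2G)$ relations to kill the central kernels, giving the same $\Theta(n)$. Second, in case (5) your mechanism (``commuting relations generated by a bounded number of conjugates along $g$'') is unsubstantiated and unnecessary: the correct and simpler point, available inside your own framework, is that the hypothesis gives $d(G^N)\le d(G)+1$ for \emph{all} $N$, so the Tietze reduction keeps the generator count at the constant $k=d(G)+1$ and the merge cost is $O(k^2)=O(1)$ per doubling automatically (paper: Lemma 2.4 and Corollary 2.5), yielding $\rho(G^n)=O(\log n)$.
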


I see no reason to expect that $\rho(G^n)$ is a monotone function of $n$ for all finitely presented perfect groups $G$, and this is intriguing in the context of the celebrated
{\em{Relation Gap Problem}} \cite{jens}.  
Recall that the {\em deficiency} of a finite group presentation 
$\< A \mid R\>$ is $|R|-|A|$, and the deficiency $\deff(G)$ 
of a group $G$ is
defined\footnote{there are two conventions in the literature: many authors take this
definition to be $-\deff(G)$.} to be the least deficiency among all finite presentations of $G$. Our constructions suggest that the following problem might have a positive answer. If it does, then  $\Gamma^m$ would be a counterexample to the  
Relation Gap Problem: see remark \ref{r:rel-gap} for an explanation and variations.

\begin{problem} Does there exist a finitely presented perfect group $\G$ and a
positive integer $m$ such that $\deff(\Gamma^m)>\deff(\Gamma^{m+1})$ 
or $\rho(\Gamma^m)>\rho(\Gamma^{m+1})$?
\end{problem}

\section{Proofs}

We shall need some basic facts about universal central extensions of groups. 

A {\em central extension} of a group $G$ is
a group $\widetilde G$ equipped with an epimorphism $\pi:\widetilde{G}\to G$ whose kernel is central
in $\widetilde G$. Such an extension is {\em universal} if given any other central extension
$\pi': E \to G$ of $G$, there is a unique homomorphism $f : \widetilde G \to  E$ such that
$\pi'\circ f = \pi$.  The
standard reference for this material is \cite{milnor} pp. 43--47.   The
properties that we need here are these: $G$ has a universal central extension $\widetilde G$
if (and only if) $H_1(G, \Z) = 0$; there is a short exact sequence
$$1 \to  H_2(G,\Z) \to \widetilde{G}\to G\to 1\ ;$$
and if $G$ has no non-trivial finite quotients then neither does $\widetilde G$.

The following result is Proposition 3.5 of \cite{mb:karl}.

\begin{lemma}\label{l:Phat} Let $G=\<X\mid R\>$ be a perfect group,
let $F$ be the free group on $X$ and for each $x\in X$ let $c_x\in [F,F]$ be a word
such that $x=c_x$ in $G$. Then the following is a presentation of the universal central extension
of $G$:
\begin{equation}\label{Ghat}
\tilde G = \< X\mid x^{-1}c_x,\, [r,x]\ (\forall r\in R,\, x\in X)\>,
\end{equation}
and the identity map $X\to X$ extends uniquely to an epimorphism $\tilde G\to G$ with kernel isomorphic to $H_2(G,\Z)$.
\end{lemma}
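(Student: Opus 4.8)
The plan is to show directly that the group defined by presentation (\ref{Ghat}) --- call it $\tilde G$ --- is a central extension of $G$ satisfying the universal property; it then follows that $\tilde G$ is \emph{the} universal central extension, and that $\ker\pi\cong H_2(G,\Z)$ via the exact sequence recalled above.

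First I would produce the epimorphism $\pi$. Write $F$ for the free group on $X$ and $\bar R$ for the normal closure of $R$ in $F$, so that $G=F/\bar R$. Each defining relator of $\tilde G$ lies in $\bar R$: one has $x^{-1}c_x\in\bar R$ precisely because $x=c_x$ holds in $G$, and $[r,x]\in\bar R$ because $r\in\bar R$ and $\bar R$ is normal in $F$. Hence the identity map on $X$ induces an epimorphism $\pi\colon\tilde G\to G$ whose kernel is the image of $\bar R$ in $\tilde G$. The relators $[r,x]$ assert that the image of each $r\in R$ commutes with every generator and is therefore central in $\tilde G$; as $\ker\pi$ is the normal closure of these central images, it is generated by central elements and so is itself central. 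Thus $\pi$ is a central extension. (The relators $x^{-1}c_x$ with $c_x\in[F,F]$ force each generator into $[\tilde G,\tilde G]$, so $\tilde G$ is perfect; this is reassuring, though not logically needed below.)

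The heart of the matter is a lift-independence observation, which I would isolate as the key step. Let $\pi'\colon E\to G$ be \emph{any} central extension and let $w\in[F,F]$. Substituting lifts $e_x\in E$ of prescribed elements of $G$ into the word $w$ produces a value $w(e)$ that is independent of the chosen lifts: replacing each $e_x$ by $e_xz_x$ with $z_x\in\ker\pi'$ central multiplies $w(e)$ by $\prod_x z_x^{\sigma_x}$, where $\sigma_x$ is the exponent-sum of $x$ in $w$, and this vanishes because $w\in[F,F]$. Granting this, I define $f$ on generators by letting $f(x)$ be the common value of $c_x$ evaluated at arbitrary lifts of the images in $G$ of the generators; since $x=c_x$ in $G$, this $f(x)$ is itself a lift of the image of $x$. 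I would then check that $f$ sends each defining relator of $\tilde G$ to $1$: the relation $x=c_x$ is respected because $f(c_x)$ is $c_x$ evaluated at the elements $f(y)$, which are again lifts of the appropriate elements of $G$, so $f(c_x)=f(x)$ by lift-independence; the relation $[r,x]$ is respected because $f(r)$ maps under $\pi'$ to $r=1$ in $G$ and hence is central in $E$. Thus $f$ descends to a homomorphism $\tilde G\to E$ with $\pi'\circ f=\pi$.

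Uniqueness is where the relators $x=c_x$ do their essential work, and together with the lift-independence lemma it is the step I expect to require the most care. Any homomorphism $f'\colon\tilde G\to E$ with $\pi'\circ f'=\pi$ satisfies $f'(x)=f(x)z_x$ for some central $z_x$, since $f'(x)$ and $f(x)$ are lifts of the same element of $G$; feeding this into the identity $f'(x)=f'(c_x)=c_x(f'(y))$ and applying lift-independence once more makes the central corrections cancel, forcing $f'(x)=f(x)$ on every generator and hence $f'=f$. This establishes the universal property, identifying $\tilde G$ with the universal central extension of $G$ and $\ker\pi$ with $H_2(G,\Z)$.
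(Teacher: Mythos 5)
Your proof is correct. It is worth pointing out that the paper itself contains no argument for this lemma: it is quoted as Proposition 3.5 of \cite{mb:karl}, so there is no internal proof to compare against, and what you have written is a genuinely self-contained verification. Your route is the classical mechanism underlying Milnor's construction of universal central extensions from free presentations, specialised to this presentation, and all three pillars are sound: (i) centrality of $\ker\pi$, because the relators $[r,x]$ make the images of the elements of $R$ central and $\ker\pi$ is generated by these images (conjugates of central elements being the elements themselves); (ii) the lift-independence of words $w\in[F,F]$ under central corrections, since the corrections collect into $\prod_x z_x^{\sigma_x}$ with every exponent sum $\sigma_x$ equal to zero; (iii) the existence and uniqueness of the comparison map $f$, each checked correctly against both families of relators. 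One remark on your uniqueness step: you exploit the relators $x^{-1}c_x$ directly, whereas the textbook argument observes that two lifts $f,f'$ differ by a homomorphism $g\mapsto f(g)^{-1}f'(g)$ into the central (abelian) kernel of $E\to G$, which must vanish because $\tilde G$ is perfect; so the perfectness you set aside as ``reassuring, though not logically needed'' is in fact doing the work in your argument too, just in disguise --- the relators $x^{-1}c_x$ are precisely what make $\tilde G$ perfect. Finally, your identification of $\ker\pi$ with $H_2(G,\Z)$ legitimately leans on the facts recalled in the paper just before the lemma (a perfect group has a universal central extension, with kernel $H_2(G,\Z)$) together with the standard uniqueness-up-to-compatible-isomorphism of universal objects; this is the cleanest way to finish, avoiding any appeal to Hopf's formula.
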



\begin{corollary} If $H_1(G,\Z)=H_2(G,\Z)=0$, then (\ref{Ghat})
is a presentation of $G$. 
\end{corollary}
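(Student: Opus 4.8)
The plan is to deduce the corollary directly from Lemma~\ref{l:Phat}, using only the two homological hypotheses and the exact sequence recalled at the start of this section. First I would observe that the hypothesis $H_1(G,\Z)=0$ says precisely that the abelianisation $G/[G,G]$ is trivial, i.e.\ that $G$ is perfect. This is what licenses the application of Lemma~\ref{l:Phat}, and it also guarantees that the data required to form the presentation (\ref{Ghat}) are available: since each generator $x\in X$ has image lying in $G=[G,G]$, it can be written in $G$ as a product of commutators, and lifting such an expression to the free group $F$ yields a word $c_x\in[F,F]$ with $x=c_x$ in $G$.

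Next I would quote the conclusion of Lemma~\ref{l:Phat}. The group defined by (\ref{Ghat}) is the universal central extension $\widetilde G$ of $G$, and the identity map on $X$ extends to an epimorphism $\pi:\widetilde G\to G$ whose kernel is isomorphic to $H_2(G,\Z)$; equivalently $\pi$ fits into the short exact sequence $1\to H_2(G,\Z)\to\widetilde G\to G\to 1$.

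Finally I would bring in the second hypothesis. Since $H_2(G,\Z)=0$, the kernel of $\pi$ is trivial, so $\pi$ is an isomorphism $\widetilde G\isom G$. Hence the presentation (\ref{Ghat}) of $\widetilde G$ is at the same time a presentation of $G$, which is exactly the assertion of the corollary. I do not expect any real obstacle here: the statement is a direct specialisation of Lemma~\ref{l:Phat}, and the only point meriting a word of care is the existence of the words $c_x$, which is immediate from perfectness; everything else is a formal consequence of the exact sequence and the vanishing of $H_2(G,\Z)$.
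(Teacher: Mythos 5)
Your argument is correct and is exactly the intended one: the paper leaves this corollary without proof precisely because it is the immediate specialisation of Lemma~\ref{l:Phat} that you describe, with $H_1(G,\Z)=0$ ensuring $G$ is perfect (so the lemma applies and the words $c_x$ exist) and $H_2(G,\Z)=0$ making the kernel of $\widetilde G\to G$ trivial. Nothing is missing; your attention to the existence of the $c_x$ is a reasonable extra word of care but not a gap in the paper's reasoning.
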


It will be convenient to use functional notation for words.
Thus, given a word $u$ in the symbols $x_1^{\pm 1},\dots,x_k^{\pm 1}$, we write $u(\underline{x})$ to emphasize
the underlying alphabet and we write $u(\underline{y})$ for the word obtained by replacing each occurrence
of each $x_i$ with $y_i$, where $y_1^{\pm 1},\dots,y_k^{\pm 1}$ is a second (ordered) alphabet.

\begin{proposition}\label{p:pres}
Let $G=\<x_1,\dots,x_k\mid r_1,\dots,r_l\>$, let $F$ be the
free group on the $x_i$, suppose that $H_1(G,\Z)=H_2(G,\Z)=0$, and for each $x_i$ fix $c_i(\underline{x})\in[F,F]$
such that $x_i=c_i(\underline{x})$ in $G$. Then the following is a presentation of $G\times G$:
\begin{equation}\label{presGxG}
\< x_1,\dots,x_k, y_1,\dots, y_k \mid r_1,\dots, r_l,\, y_i^{-1}c_i(\underline{y}),\,
[x_iy_i^{-1},y_j],\, 1\le i,j\le k\>.
\end{equation}
\end{proposition}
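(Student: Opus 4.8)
The plan is to prove that the obvious epimorphism $\phi$ from the group $P$ defined by (\ref{presGxG}) onto $G\times G$ is an isomorphism. Writing $g_i$ for the image of $x_i$ in $G$, I would define $\phi$ on generators by $x_i\mapsto(g_i,g_i)$ and $y_i\mapsto(1,g_i)$. First I would check that $\phi$ kills the three families of relators: the $r_j(\underline x)$ map to $(r_j(\underline g),r_j(\underline g))=(1,1)$; the $y_i^{-1}c_i(\underline y)$ map to $(1,\,g_i^{-1}c_i(\underline g))=(1,1)$, since the first coordinate of $c_i((1,g_1),\dots)$ is trivial and $c_i(\underline g)=g_i$ in $G$; and the $[x_iy_i^{-1},y_j]$ map to commutators between the two factors, hence to $1$. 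So $\phi$ is well defined, and it is onto because $\phi(x_iy_i^{-1})=(g_i,1)$ and $\phi(y_i)=(1,g_i)$ together generate $G\times G$. Everything then reduces to injectivity of $\phi$.

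Set $z_i:=x_iy_i^{-1}$ and let $Y=\langle y_1,\dots,y_k\rangle$, $Z=\langle z_1,\dots,z_k\rangle$ be the corresponding subgroups of $P$; since $x_i=z_iy_i$, they generate $P$. The relators $[z_i,y_j]$ say exactly that $Y$ and $Z$ commute elementwise, so $P=YZ$ and the product map $\mu\colon Y\times Z\to P$ is a well-defined epimorphism. The key computational tool is a \emph{separation identity}: the homomorphism $F\to Y\times Z$ sending $x_i\mapsto(y_i,z_i)$, composed with $\mu$, is precisely the map $x_i\mapsto y_iz_i=x_i$ into $P$; hence for every word $w$ one has $w(\underline x)=w(\underline y)\,w(\underline z)$ in $P$. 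Applying this to $r_j$ gives $r_j(\underline y)\,r_j(\underline z)=1$, so $r_j(\underline y)=r_j(\underline z)^{-1}\in Z$ and $r_j(\underline z)=r_j(\underline y)^{-1}\in Y$.

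The plan is now to prove $Y\cong G\cong Z$ by building inverse maps out of the presentation (\ref{Ghat}), which presents $G$ itself by the Corollary. For $Y$ I would send $x_i\mapsto y_i$: this is legitimate provided the relators of (\ref{Ghat}) hold, namely $y_i^{-1}c_i(\underline y)=1$ (a defining relator of $P$) and $[r_j(\underline y),y_i]=1$. This second relation is the step I expect to be the main obstacle, and the crux of the whole argument: it is \emph{not} imposed directly, but it comes for free, because $r_j(\underline y)=r_j(\underline z)^{-1}$ lies in $Z$ and therefore commutes with every element of $Y$. This yields an epimorphism $G\to Y$, $g_i\mapsto y_i$, whose composite with $\psi_2:=p_2\circ\phi\colon Y\to G$ (note $\phi(y_i)=(1,g_i)$) is the identity of $G$; hence $Y\cong G$ and $\psi_2|_Y$ is injective. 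For $Z$ the same scheme applies once I know $z_i=c_i(\underline z)$ in $P$: since $x_i=c_i(\underline x)$ holds in $G$, the word $x_i^{-1}c_i(\underline x)$ lies in the normal closure of $R$ in $F$, so it is already trivial in $P$; expanding $x_i=z_iy_i$ and $c_i(\underline x)=c_i(\underline y)\,c_i(\underline z)$ by the separation identity and cancelling $y_i=c_i(\underline y)$ gives $z_i=c_i(\underline z)$. As $[r_j(\underline z),z_i]=1$ for the symmetric reason ($r_j(\underline z)\in Y$), one obtains $Z\cong G$ with $\psi_1:=p_1\circ\phi$, $\phi(z_i)=(g_i,1)$, injective on $Z$.

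To finish, I would deduce injectivity of $\phi$ from these two facts. Given $p\in\ker\phi$, write $p=wv$ with $w\in Y$ and $v\in Z$ (possible since $P=YZ$); then $\phi(p)=(1,\psi_2(w))(\psi_1(v),1)=(\psi_1(v),\psi_2(w))=1$ forces $\psi_1(v)=\psi_2(w)=1$, whence $v=w=1$ by the injectivity of $\psi_1|_Z$ and $\psi_2|_Y$, so $p=1$. Therefore $\phi$ is an isomorphism and (\ref{presGxG}) is a presentation of $G\times G$. The only genuinely delicate point is the observation that the single commuting family $[z_i,y_j]$ simultaneously supplies the ``missing'' relations $[r_j(\underline y),y_i]$ and $[r_j(\underline z),z_i]$ needed to recognise $Y$ and $Z$ as copies of $G$ through the universal-central-extension presentation; the remainder is the separation identity and formal bookkeeping.
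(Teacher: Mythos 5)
Your proof is correct, and its crux coincides with the paper's: both arguments rest on Lemma \ref{l:Phat} and its Corollary (so that (\ref{Ghat}) presents $G$ itself when $H_1G=H_2G=0$), together with the observation that the commutation relators supply, for free, the missing relations $[r_j(\underline{y}),y_i]$ --- in your formulation because $r_j(\underline{y})=r_j(\underline{z})^{-1}$ lies in the centralizing subgroup $Z$, in the paper's because conjugation by the trivial element $r_j(\underline{x})$ agrees with conjugation by $r_j(\underline{y})$ on $\langle y_1,\dots,y_k\rangle$. Where you genuinely diverge is in the endgame. The paper keeps the generators $x_i$, identifies the presented group as $G_1\rtimes G_2$ with $G_1=\langle y_1,\dots,y_k\rangle$ and $G_2=\langle x_1,\dots,x_k\rangle$ both isomorphic to $G$, and untwists the semidirect product at the last step because the action is by inner automorphisms. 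You instead perform the untwisting substitution $z_i=x_iy_i^{-1}$ at the outset, obtaining the symmetric commuting decomposition $P=YZ$, and you must then run the universal-central-extension argument twice --- once for $Y$ and once for $Z$, the latter requiring the extra derivation $z_i=c_i(\underline{z})$ via your separation identity --- whereas the paper gets $G_2\cong G$ essentially for free, since the relators $r_j(\underline{x})$ are imposed directly on the $x_i$. The price of your symmetry is that extra verification; what it buys is a fully explicit conclusion: your retraction argument (the restrictions of $\phi$ to $Y$ and $Z$ are split by the epimorphisms $G\to Y$ and $G\to Z$) proves injectivity of $\phi$ outright, whereas the paper's assertion that the group ``has the form $G_1\rtimes G_2$'' leaves the normality and trivial-intersection checks to the reader. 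Both routes are sound; yours is longer but more self-contained.
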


\begin{proof}  First observe that the  
last family of relations
can be written as $x_i^{-1}y_jx_i=y_i^{-1}y_jy_i$, from which it follows that
$x_i^{-1}ux_i= y_i^{-1}uy_i$ for all words $u$ in the free group on $\{y_1,\dots,y_k\}$ and each
$i=1,\dots,k$. Therefore, in the group presented,
the transcription $r_j(\underline{y})$ of each relation $r_j(\underline{x})$
is central in the subgroup $G_1:=\<y_1,\dots,y_k\>$, because $y_i=y_i^{r_j(\underline{x})}
=y_i^{r_j(\underline{y})}$. Thus $G_1$ (which is clearly
normal) satisfies the relations that were used in
 Lemma \ref{l:Phat} to define the universal central extension $\widetilde{G}$. And
 $\widetilde{G}=G$, because $H_2(G,\Z)=0$.
 
At this stage we know that the group given by presentation (\ref{presGxG}) has the form $G_1\rtimes G_2$,
with $G_1\cong G_2\cong G$, where $G_1$ is the subgroup generated by the $y_i$ and $G_2$ is the subgroup
generated by the $x_i$. The action $\phi:G_2\to {\rm{Aut}}(G_1)$ defining the semidirect product is by inner
automorphisms, $x_i\mapsto {\rm{ad}}_{y_i}$. Because this action factors $G_2\to G_1\to {\rm{Inn}}(G_1)$,
we have $G_1\rtimes G_2 \cong G_1\times G_2$; indeed an isomorphism 
$\phi:G_1\times G_2 \to G_1\rtimes G_2$ is given by $\phi(y_i)=y_i$ and 
$\phi(x_i)= y_i^{-1}x_i$.
\end{proof}

At first blush, this proposition seems to gain us little or nothing compared to the naive presentation of $G\times G$:
we have traded the $l$ obvious relations of $G_1$ for the $k$ relations $y_i^{-1}c_i(\underline{y})$. 
The real benefit comes when we iterate the construction and use the fact that the number of generators
that $G^n$ requires grows strikingly slowly (an old observation of Philip Hall \cite{hall}). To exploit this
we need: 

\begin{lemma}\label{l:const} Let $G=\<x_1,\dots,x_k\mid r_1,\dots,r_l\>$ and suppose $H_1G=H_2G=0$. 
If $G^N$ requires at
most $k$ generators and $2^n\le N$, then $G^{2^n}$ has a presentation with 
$k$ generators and $n(k^2+2k)+l$ relations. 
\end{lemma}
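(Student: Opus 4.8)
The plan is to argue by induction on $n$, doubling the number of copies of $G$ at each stage while keeping the number of generators fixed at $k$ and paying a fixed toll of $k^2+2k$ extra relators per doubling. Writing $H_n:=G^{2^n}$, I would take as inductive hypothesis that $H_n$ admits a presentation $\<z_1,\dots,z_k\mid S_n\>$ with $k$ generators and $|S_n|=n(k^2+2k)+l$ relators; the base case $n=0$ is the given presentation of $G$, which has $k$ generators and $l=0\cdot(k^2+2k)+l$ relators. The key structural observation is that $H_{n+1}=H_n\times H_n$, so at each step I can feed the presentation of $H_n$ into Proposition \ref{p:pres}.

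Before doing so I must verify that $H_n$ satisfies the hypotheses of that proposition, namely $H_1H_n=H_2H_n=0$. This propagates through direct products: the K\"unneth formula gives $H_1(A\times B)=H_1A\oplus H_1B$ and $H_2(A\times B)=H_2A\oplus H_2B\oplus(H_1A\otimes H_1B)$, so $H_1G=H_2G=0$ forces $H_1H_n=H_2H_n=0$ for every $n$. In particular $H_n$ is perfect, so each $z_i$ lies in $[H_n,H_n]$ and we may fix words $c_i\in[F,F]$ (with $F$ free on $z_1,\dots,z_k$) representing it in $H_n$. Proposition \ref{p:pres} then yields a presentation of $H_{n+1}=H_n\times H_n$ on the $2k$ generators $z_1,\dots,z_k,w_1,\dots,w_k$ with relators $S_n$, the $k$ words $w_i^{-1}c_i(\underline{w})$, and the $k^2$ commutators $[z_iw_i^{-1},w_j]$, for a total of $|S_n|+k+k^2$ relators.

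The remaining ingredient is a generator reduction. Since $2^{n+1}\le N$ and $G^N$ is $k$-generated, the quotient $H_{n+1}=G^{2^{n+1}}$ is also $k$-generated, and I would then invoke the standard Tietze bookkeeping: if a group has a finite presentation with $m$ generators and $s$ relators and can be generated by $d$ elements $g_1,\dots,g_d$, then it has a presentation with $d$ generators and $s+d$ relators. Concretely, adjoin new generators $g_i$ together with defining relators $g_i^{-1}u_i$ (where $g_i=u_i$ in the group), adjoin the $m$ relators $x_j^{-1}v_j$ expressing each old generator as a word $v_j$ in the $g_i$ (these are consequences, hence legitimate Tietze additions), and finally use these to eliminate all $m$ old generators; the net change is $+d$ relators and a passage from $m$ to $d$ generators, independent of $m$. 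Applying this with $m=2k$, $s=|S_n|+k+k^2$ and $d=k$ produces a presentation of $H_{n+1}$ on $k$ generators with $(|S_n|+k+k^2)+k=|S_n|+(k^2+2k)=(n+1)(k^2+2k)+l$ relators, completing the induction.

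The step I expect to require the most care is the generator-reduction count. The delicate points are that one must check the relators $x_j^{-1}v_j$ really are consequences of the enlarged presentation (so that their addition is a valid Tietze move), and that the elimination of the $2k$ old generators removes exactly as many relators as were introduced in that auxiliary step, so that the only net cost is the $d=k$ relators defining the new generators. Everything else, namely the homology computation and the direct application of Proposition \ref{p:pres}, is routine once the inductive framework is in place.
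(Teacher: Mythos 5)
Your proof is correct and follows essentially the same route as the paper: iterate Proposition \ref{p:pres} to double the power, then use Tietze moves (adding $k$ defining relators for a small generating set and eliminating the $2k$ old generators) to return to $k$ generators, for a net cost of $k^2+2k$ relators per doubling. Your explicit K\"unneth verification that $H_1$ and $H_2$ vanish for all the powers $G^{2^j}$ is a detail the paper leaves implicit, and is a worthwhile addition.
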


\begin{proof} As in the previous proof, we construct a presentation of $G^2$ with 
$2k$ generators $b_1,\dots,b_{2k}$ and $k^2+k+l$ relations. We then make Tietze moves to add a new generating set
$a_1,\dots,a_k$, together with $k$ relations expressing the $a_i$ as words in the generators $b_i$.
There are words $u_i$ in the generators $a_j$ such that $b_i=u_i$ in $G\times G$. We make
further Tietze moves, removing the generators $b_i$ and replacing each occurrence of $b_i$ in the
relators by $u_i$. Thus we obtain a presentation of $G\times G$ with 
$k$ generators and $k^2+2k+l$ relators.

Repeating the argument with $G\times G$ in place of $G$, we obtain a presentation for $G^4$ with 
$k$ generators and $2(k^2+2k)+l$ relators. And continuing in this manner (provided that we stay
in the range where $G^{2^n}$ needs only $k$ generators), we obtain a presentation for $G^{2^n}$
with $k$ generators and $n(k^2+2k)+l$ relators.
\end{proof}

\begin{corollary}\label{c:const} If $G$ and $N$ are as in the lemma and $m\le N/2$, then $G^m$ has a
presentation with $k$ generators and $(k^2+2k)(\log_2 m +1) + l +k$ relators.
\end{corollary}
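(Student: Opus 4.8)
The plan is to reduce the general case to the power-of-two case already handled by Lemma~\ref{l:const}, and then to pass from $G^{2^n}$ down to $G^m$ at a cost of only $k$ extra relators. First I would choose $n=\lceil \log_2 m\rceil$, the least integer with $m\le 2^n$, and verify that the hypothesis $2^n\le N$ of the lemma holds. This is a short arithmetic check using $m\le N/2$: if $m$ is a power of $2$ then $2^n=m\le N$, while otherwise $2^{n-1}<m$ forces $2^n<2m\le N$. Lemma~\ref{l:const} then furnishes a presentation of $G^{2^n}$ with $k$ generators, say $a_1,\dots,a_k$, and $n(k^2+2k)+l$ relators. Since $n\le \log_2 m +1$, this relator count is already bounded by $(k^2+2k)(\log_2 m +1)+l$.

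Next I would realise $G^m$ as a quotient of $G^{2^n}$ and control the kernel. Projecting onto the first $m$ coordinates gives an epimorphism $G^{2^n}\to G^m$ whose kernel $K$ is the direct factor $G^{2^n-m}$ consisting of the last $2^n-m$ copies of $G$. Because $2^n-m\le N$ and $G^N$ requires at most $k$ generators — whence so does any direct factor $G^M$ with $M\le N$, being a retract of $G^N$ — the subgroup $K$ is generated by $k$ elements. I would write these as words $w_1,\dots,w_k$ in $a_1,\dots,a_k$ and adjoin the $k$ relators $w_1,\dots,w_k$ to the presentation of $G^{2^n}$.

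The crux is to check that these $k$ relators actually suffice, that is, that their normal closure in $G^{2^n}$ is exactly $K$. This is where one uses that $K$ is a direct factor and hence normal: the normal closure of $\{w_1,\dots,w_k\}$ contains the subgroup they generate, which is all of $K$, and is contained in $K$ because each $w_i$ already lies in the normal subgroup $K$. Consequently the enlarged presentation defines $G^{2^n}/K\cong G^m$, with $k$ generators and at most $(k^2+2k)(\log_2 m +1)+l+k$ relators, as required.

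I do not expect any single step to present a serious obstacle; the two points that demand care are the arithmetic confirming $2^n\le N$ (and hence $2^n-m\le N$, so that $K$ is indeed $k$-generated) and the structural observation that a direct factor, being normal, is normally generated by any set of subgroup generators. Both are routine once the retract structure of $G^{2^n}\to G^m$ is made explicit, and together they convert the $O(\log n)^{\,}$-style bound of the lemma into the stated estimate for arbitrary $m\le N/2$.
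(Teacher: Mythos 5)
Your proof is correct and follows essentially the same route as the paper: take the least $n$ with $m\le 2^n$, apply Lemma~\ref{l:const} to $G^{2^n}=G^m\times G^{2^n-m}$, and kill the $k$-generated complementary factor with $k$ extra relators. The details you add beyond the paper's proof --- the arithmetic check that $2^n\le N$ and the observation that a normal subgroup is normally generated by any generating set --- are exactly the points the paper leaves implicit, and you handle them correctly.
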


\begin{proof} Let $n$ be the least integer such that $m\le 2^n$ and write $G^{2^n} = G^m\times G^{2^n-m}$.
The lemma tells us that $G^{2^n}$ has a presentation with $k$ generators and $n(k^2+2k)+l$ relators.
Moreover, as $2^n-m<N$, the second factor in the given decomposition is a $k$-generator group, and can therefore be
killed by the addition of at most $k$ relations. To complete the proof, note that $n-1< \log_2 m$.
\end{proof}

It is an open question as to whether every finitely generated perfect group is the
normal closure of one element. If it is, then the $k$ relations added to kill $G^{2^n-m}$ in the above
proof could be replaced by a single relation.  

\smallskip
We shall need the following result of
Wiegold and Wilson \cite{WW}; the proof presented here is new but has much in common with the original. 
\begin{proposition}\label{l:d-is-log}
Let $G$ be a perfect group. If $d(G)=r$, then $d(G^m) \le r(1+ \lceil \log_2 (m+1)\rceil)$.
\end{proposition}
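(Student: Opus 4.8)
The plan is to isolate a single doubling estimate and then iterate it. The engine is the claim that, for a perfect group $G$ with $d(G)=r$, one has $d(G^{2n})\le d(G^n)+r$ for every $n$. Granting this, an immediate induction gives $d(G^{2^k})\le (k+1)r$, and for an arbitrary $m$ I would take $k=\lceil\log_2 m\rceil$ and note that $G^m$ is a quotient of $G^{2^k}=G^m\times G^{2^k-m}$; since the number of generators cannot increase under passage to a quotient, this yields $d(G^m)\le (k+1)r\le r(1+\lceil\log_2(m+1)\rceil)$, as required.

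To prove the doubling estimate I would realise $G^{2n}$ as an internal semidirect product. Write $P=G^n\times 1$ and let $\Delta=\{(x,x):x\in G^n\}$ be the diagonal. Then $P$ is normal, $P\cap\Delta=1$ and $P\Delta=G^{2n}$ (because $(x,y)=(xy^{-1},1)(y,y)$), so $G^{2n}=P\rtimes\Delta$; moreover $\Delta$ acts on $P\cong G^n$ by the inner action of $G^n$ on itself. Now $\Delta\cong G^n$ is generated by $d(G^n)$ elements, so it suffices to exhibit $r$ elements $w_1,\dots,w_r\in P$ whose $\Delta$-conjugates generate $P$: then $\langle\Delta,w_1,\dots,w_r\rangle\supseteq\langle\Delta,P\rangle=G^{2n}$. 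Because the $\Delta$-action on $P$ is the conjugation action of $G^n$, this is exactly the demand that $G^n$ be the normal closure of an $r$-element subset.

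The crux of the argument --- and the step I expect to be the main obstacle --- is therefore the assertion that for a perfect group the minimal number of normal generators does not grow under direct powers: if $\nu(K)$ denotes the least size of a subset whose normal closure is $K$, then $\nu(G^n)\le\nu(G)\le d(G)=r$. I would prove $\nu(A\times B)\le\max(\nu(A),\nu(B))$ for perfect $A,B$ and iterate. Choosing normal generators $a_1,\dots,a_t$ of $A$ and $b_1,\dots,b_t$ of $B$ (with $t=\max(\nu(A),\nu(B))$, padding by $1$), I would consider the normal closure $N$ of $\{(a_1,b_1),\dots,(a_t,b_t)\}$ in $A\times B$. Conjugating $(a_i,b_i)$ by $(g,1)$ and multiplying by $(a_i,b_i)^{-1}$ shows that $N$ contains every $([g,a_i],1)$, and the heart of the matter is the purely group-theoretic fact that the normal closure in $A$ of $\{[g,a_i]:g\in A,\ i\le t\}$ is all of $A$. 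This is where perfectness is essential: modulo that normal closure each $a_i$ becomes central, so the quotient is generated by central elements and hence abelian, yet it is also perfect, hence trivial. Consequently $A\times 1\le N$, whence $1\times B\le N$ and $N=A\times B$.

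With this in hand the doubling estimate, and then the proposition, follow by the bookkeeping described above. An alternative to the quotient trick at the end is an odd/even induction built on $G^{2a}=G^a\times G^a$ and $G^{2a+1}=G^{a+1}\times G^a$, but the quotient argument is cleaner and in fact yields the slightly sharper bound $r(1+\lceil\log_2 m\rceil)$, from which the stated inequality is immediate since $\lceil\log_2 m\rceil\le\lceil\log_2(m+1)\rceil$.
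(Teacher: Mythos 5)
Your proof is correct, but it takes a genuinely different route from the paper's. The paper argues in one shot: setting $M=\lceil\log_2(m+1)\rceil$, it uses the binary digits $\varepsilon_i(j)$ of the coordinate indices $j=1,\dots,m$ to define elements $a_{r,i}\in G^m$ whose $j$-th entry is $a_r^{\varepsilon_i(j)}$, checks that these $rM$ elements together with the $r$ diagonal elements generate a subgroup projecting onto every pair of coordinates, and then quotes the result of \cite{BMi} that such a subgroup contains the $(m-1)$-st term of the lower central series of $G^m$, which is everything because $G^m$ is perfect. You instead prove a doubling estimate $d(G^{2n})\le d(G^n)+r$ via the decomposition $G^{2n}=(G^n\times 1)\rtimes\Delta$, reducing it to the fact that the normal-generation number $\nu$ satisfies $\nu(G^n)\le\nu(G)\le r$ for perfect $G$; your proof of that lemma (each $a_i$ becomes central in the quotient, which is therefore abelian, yet perfect, hence trivial) is exactly right, and the iteration and final quotient step are sound. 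Your route is self-contained --- it avoids the subdirect-product machinery of \cite{BMi} --- and it yields the marginally sharper constant $r(1+\lceil\log_2 m\rceil)$. It also isolates a fact of independent use in this paper: since $\nu(G^{2^n-m})\le\nu(G)$, the factor $G^{2^n-m}$ in Corollary \ref{c:const} could be killed with $\nu(G)$ relations rather than $k$, sharpening the remark made there about normal closures. What the paper's argument buys in exchange is an explicit generating set written down in closed form rather than through an induction, in the spirit of the original treatments of Hall \cite{hall} and Wiegold--Wilson \cite{WW}.
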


\begin{proof} Let $M=\lceil\log_2 (m+1)\rceil$, the least integer with $m<2^M$.
The proof uses binary expansions $j = \sum_{i=0}^{M-1} \e_i(j)2^i$ of integers
$j=1,\dots,m$.
Given a generating set $\{a_1,\dots,a_k\}$ for $G$, for $i=0,\dots,M-1$ we define
$$a_{r,i} = (a_r^{\e_i(1)}, a_r^{\e_i(2)}, \dots , a_r^{\e_i(m)}).$$
For each pair of integers $1\le j<j' \le m$, there is some $i$ such that 
$\e_i(j)\neq \e_i(j')$, and for that $i$ we have $p_{j,j'}(a_{r,i})\in\{(a_r,1),
\, (1,a_r)\}$, where $p_{jj'}:G^m\to G\times G$ is the coordinate projection to the $j$ and $j'$ factors. The image under $p_{jj'}$ of the diagonal element $\alpha_r:=(a_r,\dots,a_r)$ is $(a_r,a_r)$. Thus the restriction of $p_{jj'}$ to the subgroup $S<G^n$ generated by the set
$
\{\alpha_r,\ a_{r,i} \mid r=1,\dots,k;\, i=0,\dots,M-1\}
$
is surjective. It follows   
that $S$ contains the $(m-1)$-st term of the lower central
series of $G^m$ (see \cite{BMi} p.643). But $G^m$ is perfect, so each term of the
lower central series is the whole group, and therefore $S=G^m$. 
\end{proof}
 
\begin{theorem}\label{t:easy} If $G$ is a finitely presented group with $H_1G=H_2G=0$, then
$G^m$ has a finite presentation with at most $O(\log m)$ generators and $O(\log m)^3$ relators.
\end{theorem}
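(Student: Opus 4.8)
The plan is to combine the logarithmic bound on $d(G^m)$ from Proposition \ref{l:d-is-log} with the relation count of Corollary \ref{c:const}, the essential device being to let the number of generators $k$ grow (slowly) with $m$ rather than fixing a presentation of $G$ once and for all.

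First I would record that the hypothesis $H_1G=0$ means exactly that $G$ is perfect, so Proposition \ref{l:d-is-log} is available and, moreover, every generator of $G$ is a product of commutators; this guarantees the existence of the words $c_i(\underline{x})\in[F,F]$ demanded by Lemma \ref{l:const} and Corollary \ref{c:const}. Write $r=d(G)$. Now fix $m$ and set $N=2^{n}$ with $n$ the least integer satisfying $2m\le 2^n$, so that $m\le N/2$. By Proposition \ref{l:d-is-log}, $k:=d(G^N)\le r\,(1+\lceil \log_2(N+1)\rceil)=O(\log m)$, and since the coordinate projection realises $G^M$ as a quotient of $G^N$ for every $M\le N$, the group $G^N$ indeed requires at most $k$ generators.

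Next I would re-present $G$ itself on $k$ generators: starting from a fixed finite presentation $\<a_1,\dots,a_r\mid s_1,\dots,s_t\>$, I adjoin $k-r$ redundant generators together with the $k-r$ relations expressing them as words in the $a_j$. This yields a presentation of $G$ with $k$ generators and $l=t+(k-r)=O(\log m)$ relations, still with $H_1G=H_2G=0$. With this presentation, the hypotheses of Corollary \ref{c:const} hold for our choice of $N$, so $G^m$ has a presentation with $k$ generators and $(k^2+2k)(\log_2 m+1)+l+k$ relators. Since $k=O(\log m)$, the dominant term is $(k^2+2k)(\log_2 m+1)=O((\log m)^2)\cdot O(\log m)=O((\log m)^3)$, while $l+k=O(\log m)$; hence $G^m$ is presented with $O(\log m)$ generators and $O(\log m)^3$ relators, as required.

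The main obstacle is conceptual rather than computational: one cannot take $k$ fixed, because $d(G^N)$ tends to infinity with $N$, so any presentation built by iterating the doubling construction of Lemma \ref{l:const} from a \emph{fixed} $k$-generator presentation must eventually fail once $G^{2^i}$ outgrows $k$ generators. The resolution is precisely the slow, logarithmic growth supplied by Proposition \ref{l:d-is-log}: choosing $k=d(G^N)=O(\log m)$ keeps us inside the valid range of the construction while ensuring that the inflation of the relation count, driven by the quadratic factor $k^2$, contributes only $O((\log m)^3)$. The one routine point to verify is that padding the presentation of $G$ from $r$ to $k$ generators costs only $O(\log m)$ extra relations, which is immediate.
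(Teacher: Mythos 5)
Your proof is correct, but it assembles the ingredients differently from the paper, so a comparison is worthwhile. Your key move is to let the seed presentation depend on $m$: padding a presentation of $G$ up to $k = d(G^N) = O(\log m)$ generators makes the hypothesis of Lemma \ref{l:const} (that $G^N$ requires at most $k$ generators) hold verbatim, so Corollary \ref{c:const} can be quoted as a black box, and the cubic bound arises as $n\cdot k^2$ with both $n$ and $k$ of order $\log m$. The paper instead keeps $k$ fixed, of order $d(G)$, and re-runs the doubling iteration with a stage-dependent compression: the presentation of $G^{2^i}$ is compressed to about $ik$ generators (the bound supplied by Proposition \ref{l:d-is-log}), so the relator increment at stage $i$ is about $i^2k^2$ and the cubic bound arises as a sum of squares, $\sigma_n k^2$ with $\sigma_n \approx n^3/3$. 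Both routes rest on the same three pillars (the doubling trick of Proposition \ref{p:pres}, Tietze compression, and the logarithmic bound of Proposition \ref{l:d-is-log}) and give the same asymptotics with comparable constants; the trade-off is that your argument is shorter and more modular, reusing Lemma \ref{l:const} and Corollary \ref{c:const} without modification, whereas the paper's single inductive sweep produces presentations of all the powers $G^{2^n}$ simultaneously, keeps the intermediate presentations smaller (only $ik$ generators at stage $i$, rather than $k=O(\log m)$ throughout), and yields the explicit constants that are exploited later in the examples $B_p^m$ and $\widetilde A_5^{16}$. The points your write-up leaves implicit are harmless: $k-r\ge 0$ because $G$ is a quotient of $G^N$, and the padded presentation still admits the required commutator words $c_i$ because perfectness and the vanishing of $H_1$ and $H_2$ are properties of the group, not of the presentation --- both of which you essentially note.
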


\begin{proof} The preceding proposition shows that $d(G^m) = O(\log m)$.  
We fix a constant $k$ so that $G$ can be generated by $k$ elements and
$G^{m}$ can be generated by $k\lceil \log_2 m\rceil$ elements, 
for all positive integers $m$. Suppose $G=\<x_1,\dots,x_k\mid r_1,\dots,r_l\>$. Since $G^2$ only needs $k$
generators, 
as in Lemma \ref{l:const}
we obtain a presentation of $G^2$ with 
$k$ generators and $k^2+2k+l$ relators. From Proposition \ref{p:pres} we then 
get a presentation of $G^4=G^2\times G^2$ with $2k$ generators and $k^2+k+(k^2+2k+l)=2k^2+3k+l$ relators.
Applying Proposition \ref{p:pres} again
we get a presentation of $G^8$ with $4k$ generators and $(2k)^2+2k+(2k^2+3k+l)=6k^2+5k+l$ relators.
Since $G^8$ only requires $3k$ generators,  as in the proof of Lemma 2.4 we can convert this to a presentation 
with $3k$ generators and $6k^2+8k+l$ relators. 

Repeating this argument, we obtain a presentation of $G^{16}$ with $6k$ generators and
$(3k)^2+3k+(6k^2+8k+l)=15k^2+11k+l$ relators, which we convert to
one with $4k$ generators and $15k^2+15k+l$ relators. And, proceeding by induction,
we get a presentation of $G^{2^n}$ with $nk$ generators and $\sigma_n k^2 + \tau_n k + l$
relators, where $\sigma_n -1 = n(n-1)(2n-1)/6$ is the sum of squares up to $(n-1)^2$
and $\tau_n = n^2-1$. 

Given $m$, we let $n=\lceil \log_2 m\rceil$, write $G^{2^n} = G^m\times G^{2^n-m}$,
take the presentation of $G^{2^n}$ constructed above and kill the factor $G^{2^n-m}$
by adding relations to kill a generating set of cardinality $k\lceil \log_2 (2^n - m)\rceil $,
which is at most $k(n-1)$. Thus we obtain a presentation of $G^m$ with $kn = O(\log m)$ generators and 
$\sigma_n k^2 + \tau_n k + l + k(n-1) = O(\log m)^3$ relators.
\end{proof}

\noindent{\bf{Proof of Theorem \ref{t:summary}}} All of the results 
that we need concerning the growth of $d(G^n)$ can be found in \cite{WW}; they
draw on earlier results of Hall \cite{hall}, Wiegold \cite{W1, W2, W3} and others. Thus we focus on 
the estimates for $\rho(G^n)$.

A simple induction using the K\"unneth formula shows that 
if $H_1G\neq 0$ then  the number of generators needed for
$H_2G^n$ is at least $n(n-1)/2$, so one needs at least this
number of relations to present $G^n$.
The complementary upper bound is provided by the naive construction in the first paragraph of the Introduction. This proves (1).

If $G$ is perfect, then by the K\"unneth formula $H_2G^n$ is a direct sum of $n$ copies of $H_2G$, and therefore
$d(H_2G^n)$ grows linearly if $H_2G\neq 0$. This provides the lower bound for (2).
To establish a complementary upper bound, we consider the
universal central extension $\widetilde G$. Theorem \ref{t:easy} tells us that 
$\widetilde G^n$ has a presentation with at most $O(\log_2n)$ generators and $O(\log n)^3$ relations. The kernel of 
$\widetilde G\to G$ is isomorphic to $H_2G$,
so we need only add a further $n\, d(H_2G)$ relations to pass from $\widetilde{G}^n$  to the quotient $G^n$. 

(3) is Theorem \ref{t:easy}. The bounds on the number of relations in (4) follow from (3) and the
simple observation that since $H_1G^n=0$, the number of relators in any presentation is at least as great as
the number of generators. 

If $G$ is perfect and $g\in G$ has infinite order in every non-trivial quotient of $G$, then $G^n$ is generated by the diagonal copy of $G$ together with $(g,g^2,\dots,g^n)$, by
Theorem 4.4 of \cite{WW}; hence $d(G^n)\le d(G)+1$, as asserted in (5).
The required bound on $\rho(G^m)$ is a special case of Corollary \ref{c:const}.
\qed

\smallskip

\noindent{\bf{Relation Gap Problem.}}

\begin{remark} \label{r:rel-gap}
If one expresses a finitely presented 
group $G$ as a quotient of a free group $G\cong F/R$,
then the action of $F$ by conjugation on $M=R/[R,R]$ makes $M$ a $\Z F$-module (and a $\Z G$-module). It is obvious
that this module requires at most $d_F(R)$ generators, where $d_F(R)$ is the least number of elements (relations of $G$) that one needs to generate $R$ as a normal subgroup of $F$. Despite much effort, there is no example known where $M$ is proved to
require fewer than $d_F(R)$ generators -- the putative difference is the {\em relation gap}. 

An elementary calculation shows that if $N<G$ is normal and perfect, then the relation module for $F\to G/N$
requires no more generators than $M$ does, but one suspects that in some cases $G/N$ is finitely
presented and requires more relations
than $G$. 
For example, if one could prove that there is a finitely presented perfect group $\G$ such
that $\deff(\G^n) > \deff(\G^m)$ for some $n<m$, then one could take a finite presentation realising the deficiency of $\G^m$ and add relations to kill a direct factor $\G^{m-n}$;
the resulting presentation of $\G^n$ would have a relation gap of at least
$\deff(\G^n)-\deff(\G^m)$. 

Similarly, if $\rho(\G^n)>\rho(\G^m)$ for some $n<m$,
then by taking a presentation of $\G^m$ with $\rho(\G_m)$ relators
and passing to $\G^n$ by killing a direct factor $\G^{m-n}$,
we would obtain a presentation with a relation gap. More generally, it would suffice
to prove that a specific map $F\to\G^n$ from a finitely generated free group factored
as $F\to \G^m\to \G^n$, where the second map is the quotient by a direct factor 
and the kernel of the first map requires fewer normal
generators than the composite. The special role that powers of the form
$G^{2^r}$ play in the proofs of this section is intriguing in this regard. 
\end{remark}

\section{Examples}

\subsection{Profinitely trivial examples}

In \cite{BG} Fritz Grunewald and I
constructed a family of infinite super-perfect groups $B_p$ that have no non-trivial
finite quotients. The presentation given there is
$$ 
B_p = \< a, b, \alpha, \beta \mid ba^pb^{-1}=a^{p+1},\, \beta\alpha^p\beta^{-1}=\alpha^{p+1},\, 
[bab^{-1},a]\beta^{-1},\, [\beta\alpha\beta^{-1},\alpha]b^{-1}\>.
$$ 
A 3-generator, 3-relator presentation of $B_p$ 
can be obtained from this by a simple Tietze move removing the generator $\beta$ and
the third relation, replacing the occurrences of $\beta$ in the second and fourth
relations by the word  $[bab^{-1},a]$.

\begin{lemma} Let $Q$ be a quotient of $H=\<a,b\mid ba^pb^{-1}=a^{p+1}\>$. If the image of $a$ in $Q$
has finite order, then the image of $[bab^{-1},a]$ is trivial.
\end{lemma}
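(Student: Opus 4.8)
The plan is to work in the quotient $Q$, writing $\bar a,\bar b$ for the images of $a,b$, and to combine the single defining relation with an elementary order count. Let $n$ denote the order of $\bar a$, which is finite by hypothesis, and set $c:=\bar b\,\bar a\,\bar b^{-1}$. First I would record two facts that require no work. Since conjugation by $\bar b$ is an automorphism of $Q$, the element $c$ has the same order $n$ as $\bar a$; and since that conjugation is in particular a homomorphism, raising $c$ to the $p$-th power and applying the relation $\bar b\,\bar a^p\,\bar b^{-1}=\bar a^{p+1}$ (valid in the quotient $Q$) gives
$c^p=\bar b\,\bar a^p\,\bar b^{-1}=\bar a^{p+1}$.
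The goal $[c,\bar a]=1$ will follow once I show that $c$ in fact lies in the cyclic subgroup $\langle\bar a\rangle$.

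The crux of the argument, and the only step I expect to present any obstacle, is to extract a coprimality condition from the identity $c^p=\bar a^{p+1}$. The idea is to compare the orders of the two sides. The left-hand side $c^p$ has order $n/\gcd(n,p)$, since $c$ has order $n$, while the right-hand side $\bar a^{p+1}$ has order $n/\gcd(n,p+1)$. Equal elements have equal orders, so $\gcd(n,p)=\gcd(n,p+1)$. Writing $d$ for this common value, $d$ divides both $p$ and $p+1$, hence divides their difference $1$; therefore $d=1$, that is, $\gcd(n,p)=1$. This is exactly where the arithmetic interplay of the exponents $p$ and $p+1$ in the presentation does its work.

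With coprimality in hand the conclusion is immediate. Because $p$ is invertible modulo $n$, choose an integer $q$ with $pq\equiv 1\pmod n$; then, using that $c$ has order $n$,
$c=c^{pq}=(c^p)^q=\bar a^{(p+1)q}$,
so $c\in\langle\bar a\rangle$. As $\langle\bar a\rangle$ is cyclic and hence abelian, $c$ commutes with $\bar a$, which says precisely that $[\bar b\,\bar a\,\bar b^{-1},\bar a]=1$ in $Q$, as required. The degenerate case $n=1$ (where $\bar a=1$, so $c=1$) is subsumed by the same formulas, since then $\gcd(n,p)=1$ holds trivially.
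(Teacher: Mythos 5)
Your proof is correct and follows essentially the same route as the paper: both arguments compare the orders of (conjugates of) $\bar a^p$ and $\bar a^{p+1}$ via the formula for orders of powers, deduce $\gcd(n,p)=\gcd(n,p+1)=1$ from the coprimality of $p$ and $p+1$, and conclude that $\bar b\bar a\bar b^{-1}$ lies in $\langle\bar a\rangle$ and hence commutes with $\bar a$. The only cosmetic difference is that you phrase the order comparison through $c=\bar b\bar a\bar b^{-1}$ and $c^p$, while the paper phrases it through the conjugate elements $\bar a^p$ and $\bar a^{p+1}$ directly.
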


\begin{proof} If the image $\overline a$ of $a$ has finite order, then the images of $a^p$ and $a^{p+1}$  in $Q$
must have the same order, since they are conjugate. But the order of $\overline a^r$ is $m/c$, where $m$
is the order of $\overline a$ and $c=(m,r)$ is the highest common factor. Since $p$ and $p+1$ are coprime,
it follows that  $\overline{a}^p$ generates $A=\<\overline a\>$ and the image of $b$ conjugates
$\overline a$ to a power of $\overline a$. In particular, the image of $[bab^{-1},a]$ in $Q$ is trivial.
\end{proof}

We need the following strengthening of the fact that $B_p$ has no non-trivial finite quotients.

\begin{proposition} $a\in B_p$ has infinite order in every non-trivial quotient of $B_p$.
\end{proposition}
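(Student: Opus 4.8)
The plan is to argue by contradiction and exploit the cyclic structure of the four relations to make the triviality cascade through all four generators. Suppose $Q$ is a non-trivial quotient of $B_p$, write $\overline{g}$ for the image of $g\in B_p$ in $Q$, and assume for contradiction that $\overline{a}$ has finite order. The relations defining $B_p$ are $ba^pb^{-1}=a^{p+1}$, $\beta\alpha^p\beta^{-1}=\alpha^{p+1}$, $\beta=[bab^{-1},a]$, and $b=[\beta\alpha\beta^{-1},\alpha]$, so each holds in $Q$ with bars inserted. I would show that under the finite-order assumption every generator maps to $1$, contradicting $Q\neq 1$, which establishes that $\overline{a}$ has infinite order in every non-trivial quotient.

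The crucial first move is to kill $\overline\beta$. Since the relation $ba^pb^{-1}=a^{p+1}$ holds in $B_p$, the assignment $a\mapsto a,\ b\mapsto b$ defines a homomorphism $H=\langle a,b\mid ba^pb^{-1}=a^{p+1}\rangle\to B_p$; composing with $B_p\to Q$ exhibits the image of $\langle a,b\rangle$ in $Q$ as a quotient of $H$ in which the image of $a$ is $\overline{a}$. Because $\overline{a}$ has finite order by assumption, the preceding lemma applies to this quotient of $H$ and forces the image of $[bab^{-1},a]$ to be trivial; that is, $\overline\beta=\overline{[bab^{-1},a]}=1$ in $Q$. This is the one genuinely substantive step, and I would flag it as the main point to get right: one must verify that the restricted map really does factor through $H$ so that the lemma is legitimately invoked. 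Everything afterward is routine back-substitution.

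Once $\overline\beta=1$, the second relation reads $\overline\alpha^{\,p}=\overline\alpha^{\,p+1}$, hence $\overline\alpha=1$; then the fourth relation gives $\overline{b}=[\overline\beta\,\overline\alpha\,\overline\beta^{-1},\overline\alpha]=[1,1]=1$; and finally the first relation becomes $\overline{a}^{\,p}=\overline{a}^{\,p+1}$, forcing $\overline{a}=1$. Thus $\overline a=\overline b=\overline\alpha=\overline\beta=1$, so $Q$ is trivial, contradicting our choice of $Q$. Therefore $a$ has infinite order in every non-trivial quotient of $B_p$, as claimed.
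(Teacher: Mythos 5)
Your proof is correct and follows essentially the same route as the paper's: invoke the lemma about quotients of $H=\langle a,b\mid ba^pb^{-1}=a^{p+1}\rangle$ to kill $\overline\beta$, then cascade triviality through $\overline\alpha$, $\overline b$, and $\overline a$ via the remaining relations. Your explicit verification that the subgroup $\langle\overline a,\overline b\rangle\le Q$ really is a quotient of $H$ (so the lemma legitimately applies) is a point the paper leaves implicit, and is a welcome bit of care rather than a deviation.
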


\begin{proof} If the image of $a$ has finite order in a quotient $Q$, 
then the image of $[bab^{-1},a]$ is trivial, by the lemma.
The relations $\beta = [bab^{-1},a]$ and $\beta\alpha^p\beta^{-1}
 = \alpha^{p+1}$ then force $\beta$ and
$\alpha$ to have trivial image in $Q$, whence $b = [\beta\alpha\beta^{-1}, \alpha]$ does too. So $Q=1$.
\end{proof}

\begin{theorem}
For all integers $p,m$, the direct product of $m$ copies of $B_p$ has a presentation with
at most $4$ generators and $24\lceil \log_2 m \rceil -1$ relations.
\end{theorem}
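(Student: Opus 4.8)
The plan is to apply Corollary~\ref{c:const} to the group $G=B_p$, which requires first establishing the two inputs that corollary demands: that $B_p$ satisfies $H_1B_p=H_2B_p=0$, and that the powers $B_p^N$ are uniformly generated by a fixed number $k$ of elements. The homological vanishing is exactly the hypothesis that $B_p$ is super-perfect, which is recorded in the reference to \cite{BG}; this lets us invoke the machinery of Proposition~\ref{p:pres} and Lemma~\ref{l:const}. For the uniform generation, I would combine the preceding Proposition (that $a\in B_p$ has infinite order in every non-trivial quotient) with Theorem~4.4 of \cite{WW}, exactly as in the proof of part~(5) of Theorem~\ref{t:summary}: since $a$ has infinite order in every non-trivial quotient, the element $(a,a^2,\dots,a^n)$ together with the diagonal copy of $B_p$ generates $B_p^n$, so $d(B_p^n)\le d(B_p)+1$ for all $n$.

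The key numerical step is to fix the constants that feed into Corollary~\ref{c:const}. Using the $3$-generator, $3$-relator presentation of $B_p$ obtained by the Tietze move described before the first lemma, we have $k=3$ and $l=3$ for $B_p$ itself. But the relevant $k$ in the iteration is the number of generators that \emph{all} powers require simultaneously: since $d(B_p^n)\le d(B_p)+1\le 4$, we may take $k=4$ as the uniform generation bound, with $N$ arbitrarily large (indeed $N=\infty$, since the bound $d(B_p^n)\le 4$ holds for all $n$). Substituting $k=4$ and $l=3$ into the relator count $(k^2+2k)(\log_2 m+1)+l+k$ of Corollary~\ref{c:const} gives $24(\log_2 m+1)+7=24\log_2 m+31$; I would then reconcile this with the stated bound $24\lceil\log_2 m\rceil-1$ by being slightly more careful about the constant term. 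In fact the cleaner route is to apply Lemma~\ref{l:const} directly with $k=4$, giving $G^{2^n}$ a presentation with $4$ generators and $n(k^2+2k)+l=24n+3$ relators, and then to kill the complementary factor $B_p^{2^n-m}$ using a small number of additional relations before optimising the constant.

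The main obstacle, and the step deserving the most care, is pinning down the constant term so as to land exactly on $24\lceil\log_2 m\rceil-1$ rather than a slightly larger expression. This requires exploiting features special to $B_p$ that the general Corollary~\ref{c:const} does not assume. Two savings are available. First, when we decompose $B_p^{2^n}=B_p^m\times B_p^{2^n-m}$ and kill the second factor, the open question raised after Corollary~\ref{c:const}---whether a finitely generated perfect group is the normal closure of a single element---can be settled affirmatively for $B_p$: since $a$ has infinite order in every non-trivial quotient, killing $a$ alone kills all of $B_p$, and one can arrange that killing a single diagonal-type element kills the whole factor $B_p^{2^n-m}$, so only \emph{one} relation is needed rather than $k$. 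Second, one should check whether the base case of the induction and the $y_i^{-1}c_i(\underline{y})$ relations can be trimmed using the explicit low-complexity presentation of $B_p$. The hard part is therefore bookkeeping: tracking precisely how many relations each Tietze-move stage contributes and verifying that the economies special to $B_p$ reduce the general $(k^2+2k)(\log_2 m+1)+l+k$ bound to the sharper $24\lceil\log_2 m\rceil-1$, where the leading coefficient $24=k^2+2k$ with $k=4$ is forced but the additive and ceiling adjustments must be argued rather than quoted.
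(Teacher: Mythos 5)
Your skeleton matches the paper's: iterate Proposition~\ref{p:pres}/Lemma~\ref{l:const}, get uniform $4$-generation of $B_p^n$ from the element $a$ of infinite order in every non-trivial quotient (via Theorem~4.4 of \cite{WW}), and kill the complementary factor in $B_p^{2^n}=B_p^m\times B_p^{2^n-m}$. But the numerical route you actually commit to cannot reach the stated constant. Corollary~\ref{c:const} with $k=4$, $l=3$ gives $24\log_2 m+31$; your ``cleaner route'' through Lemma~\ref{l:const} gives $24n+3$ relators for $B_p^{2^n}$ (and even this is not literally licensed: the lemma needs a presentation of $B_p$ with $k=4$ generators and $l$ relators, and the explicit $4$-generator presentation has $4$ relators, not $3$). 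After killing the complementary factor you land on $24n+7$, or $24n+4$ even granting your one-relation-killing claim. The theorem asserts $24\lceil\log_2 m\rceil-1$, so you are short by at least $5$ relations, and no care with ceilings or with the final killing step closes that gap.

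The missing idea is where the paper actually gains: the \emph{first} doubling is run with the $3$-generator, $3$-relator presentation of $B_p$, and only the subsequent doublings use $k=4$. Applying Proposition~\ref{p:pres} with $k=3$, $l=3$ presents $B_p\times B_p$ with $6$ generators and $3+3+9=15$ relations; a Tietze reduction to a $4$-element generating set adds $4$, giving $19=24\cdot1-5$ relations. Each later doubling then costs exactly $16+4+4=24$, so $B_p^{2^n}$ gets a $4$-generator presentation with $24n-5$ relators, and killing $B_p^{2^n-m}$ with $4$ relations (no one-relator trick needed) lands exactly on $24\lceil\log_2 m\rceil-1$. This base-case economy (19 versus the 27 or 28 your uniform $k=4$ iteration produces) is precisely the $8$ relations you are missing; your ``second saving'' names this possibility but you neither carry out the computation nor recognize that it alone makes the bound true. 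Your ``first saving,'' by contrast, is orthogonal to the stated bound and is also unproven as written: that killing the single element $(1,\dots,1,a,\dots,a)$ kills the whole factor $B_p^{2^n-m}$ does follow, but it requires an argument (the normal closure of $[a,B_p]$ in $B_p$ is all of $B_p$, since the quotient by it is cyclic, hence trivial by perfectness), not just the observation that $B_p=\langle\langle a\rangle\rangle$.
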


\begin{proof} By Theorem \ref{t:summary}(5) (which is from \cite{WW}) or Remark \ref{bm}(1),
we know that $B_p^n$ requires at most 4 generators. To estimate the
number of relations needed, first,
as in Proposition \ref{p:pres}, we present $B_p\times B_p$ with
$6$ generators and $15$ relations. Then, 
as in the proof of Lemma \ref{l:const}, we reduce this to a presentation
with $4$ generators and $19$ relations. Continuing the argument of Lemma \ref{l:const}, we get a 4-generator presentation
of $B_p^4$ with   $16 + 4 +19 + 4 = 43$ relations, then a 4-generator presentation of $B_p^8$ with
$16+4+43+4= 67$ relations, a 4-generator presentation of $B_p^{16}$ with
$16+4+67+4=91$ relations, and a 4-generator presentation of $B_p^{2^n}$ with
$24n-5$ relations. As in Lemma \ref{l:const}, we conclude that $B_p^m$ has a 4-generator presentation with at most
$24\lceil \log_2 m \rceil -1$ relations.
\end{proof}
 
\begin{remarks}\label{bm}
(1) I do not know if the number of relations needed to present $B_p^m$ is $\Omega(\log m)$.

(2) In \cite{BM}, Baumslag and Miller constructed a 4-generator finitely presented group $G_p$ that admits a surjection 
$G_p\to G_p\times G_p$. The group $B_p$ is a quotient of $G_p$, and therefore $B_p^n$ is a quotient of $G_p$ for
all positive integers $n$. 
\end{remarks}
 
\subsection{Infinite simple groups}

The Burger-Mozes groups are infinite simple groups that arise as the fundamental groups of compact non-positively squared 2-complexes
\cite{BuM}. Such a complex $X$
is a classifying space for its fundamental group $\G=\pi_1X$, so $H_2\G = H_2X$. These complexes have many more 
2-cells than 1-cells, so $H_2X$ is a free-abelian group of non-zero rank. By combining parts (2) and (5) of Theorem \ref{t:summary}, we see that
$\G^n$ has a finite presentation with at most $d(\G)+1$ generators but the number of relations needed to present $\G^n$ grows linearly.

Rattaggi \cite{ratt} refined the original construction of Burger and Mozes to produce examples with relatively small presentations.
In particular he constructed an example
with $3$ generators and $62$ relations. 

Richard Thompson's group $T$ provides a further example of a
3-generator infinite simple group \cite{MT} (see \cite{cannon}, for example). 
Ghys and Sergiescu \cite{GS} proved that $H_2(T,\Z)\neq 0$, so again $T^n$ needs at most 4 generators
but the number of relations required to present $T^n$ grows linearly with $n$.

\subsection{Finite groups} 

Super-perfect finite groups are covered by Theorem \ref{t:summary}(4). It would be particularly interesting to
improve the estimate $\rho(G^n) = O(\log n)^3$ in this case, where one has so much more structure. 

To close, we follow our construction in the case of the binary icosahedral group $\widetilde A_5\cong{\rm{SL}}(2,5)$.
Since it is the universal central extension of $A_5$, we have $d(\widetilde A_5^n) = d(A_5^n)$. Famously,
Philip Hall \cite{hall} calculated the range of $n$ in which $d(A_5^n)=2,3$.
By following our argument in this case we get,
for example, that $\widetilde A_5^{16}$ has a 2-generator presentation with $36$ relators, while
$\widetilde A_5^{1024}$ has a 3-generator presentation with $118$ relators.

\smallskip
\noindent{\bf Acknowledgment.}
The exposition in this article benefited from the perceptive comments of a diligent referee.

\end{document}